\newtheorem{lemma}{Lemma}
\newtheorem{theorem}{Theorem}
\newtheorem{remark}{Remark}
\newcommand{\ds}{\displaystyle}
\newcommand{\lp}{\left (}
\newcommand{\rp}{\right )}
\newcommand{\cH}{\mathcal{H}}
\newcommand{\br}{\hat{R}}
\DeclarePairedDelimiter{\abs}{\lvert}{\rvert}%
\newcommand{\Prob}[1]{\Pr\lp#1\rp}
\title{On the cover Ramsey number of Berge hypergraphs}
\author{
Linyuan Lu
\thanks{University of South Carolina, Columbia, SC 29208,
({\tt lu@math.sc.edu}). This author was supported in part by NSF grant DMS-1600811.} \and
Zhiyu Wang \thanks{University of South Carolina, Columbia, SC 29208,
({\tt zhiyuw@math.sc.edu}).} 
}
\begin{document}

\maketitle

\begin{abstract}
For a fixed set of positive integers $R$, we say $\cH$ is an $R$-uniform hypergraph, or $R$-graph, if the cardinality of each edge belongs to $R$. An $R$-graph $\cH$ is \emph{covering} if every vertex pair of $\cH$ is contained in some hyperedge. For a graph $G=(V,E)$, a hypergraph $\cH$ is called a \textit{Berge}-$G$, denoted by $BG$, if there exists an injection $f: E(G) \to E(\cH)$ such that for every $e \in E(G)$, $e \subseteq f(e)$. In this note, we define a new type of Ramsey number, namely the \emph{cover Ramsey number}, denoted as $\hat{R}^R(BG_1, BG_2)$, as the smallest integer $n_0$ such that for every covering $R$-uniform hypergraph $\mathcal{H}$ on $n \geq n_0$ vertices and every $2$-edge-coloring (blue and red) of $\mathcal{H}$ , there is either a blue Berge-$G_1$ or a red Berge-$G_2$ subhypergraph. We show that for every $k\geq 2$, there exists some $c_k$ such that for any finite graphs $G_1$ and $G_2$, $R(G_1, G_2) \leq \hat{R}^{[k]}(BG_1, BG_2) \leq c_k \cdot R(G_1, G_2)^3$. Moreover, we show that for each positive integer $d$ and $k$, there exists a constant $c = c(d,k)$ such that if $G$ is a graph on $n$ vertices with maximum degree at most $d$, then $\hat{R}^{[k]}(BG,BG) \leq cn$.
\end{abstract}

\section{Introduction}
A hypergraph is a pair $\cH=(V,E)$ where $V$ is a vertex set and $E\subseteq 2^V$ is an edge set. For a fixed set of positive integers $R$, we say $\cH$ is an $R$-uniform hypergraph, or $R$-graph for short, if the cardinality of each edge belongs to $R$. 
If $R=\{k\}$, then an $R$-graph is simply a $k$-uniform hypergraph or a $k$-graph.
Given an $R$-graph $\cH = (V,E)$ and a set $S \in \binom{V}{s}$, let deg$(S)$ denote the number of edges containing $S$ and $\delta_s(\cH)$ be the minimum $s$-degree of $\cH$, i.e., the minimum of deg$(S)$ over all $s$-element sets $S \in \binom{V}{s}$. When $s = 2$, $\delta_2(\cH)$ is also called the minimum \textit{co-degree} of $\cH$. Given a hypergraph $\cH$, the \textit{$2$-shadow}(or \textit{shadow}) of $\cH$, denoted by $\partial_2(\cH)$, is a simple $2$-uniform graph $G=(V,E)$ such that $V(G)=V(\cH)$ and $uv\in E(G)$ if and only if $\{u,v\}\subseteq h$ for some $h\in E(\cH)$. Note that $\delta_2(\cH) \geq 1$ if and only if $\partial_2(\cH)$ is a complete graph. In this case, we say $\cH$ is {\em covering}.

There are several notions of a path or a cycle in hypergraphs. A \textit{Berge path} of length $t$ is a collection of $t$ hyperedges $h_1, h_2, \ldots, h_{t} \in E$ and $t+1$ vertices $v_1, \ldots, v_{t+1}$ such that $\{v_i, v_{i+1}\} \subseteq h_i$ for each $i\in [t]$. Similarly, a $k$-graph $\cH = (V,E)$ is called a \textit{Berge} cycle of length $t$ if $E$ consists of $t$ distinct edges $h_1, h_2, \ldots, h_t$ and $V$ contains $t$ distinct vertices $v_1, v_2, \ldots, v_t$ such that $\{v_i, v_{i+1}\} \subseteq h_i$ for every $i\in [t]$ where $v_{t+1} \equiv v_1$. Note that there may be other vertices than $v_1, \ldots, v_t$ in the edges of a Berge cycle or path. Gerbner and Palmer \cite{GP} extended the definition of Berge paths and Berge cycles to general graphs. In particular, given  a simple graph $G$, a hyperge graph $\cH$ is called  \emph{Berge-$G$} if there is a bijection $f:E(G) \to E(\cH)$ such that for all $e \in E(G)$, we have $e \subseteq f(e)$.

We say an $R$-graph $\cH$ on $n$ vertices contains a \textit{Hamiltonian Berge cycle (path)} if it contains a Berge cycle (path) of length $n$ (or $n-1$). We say $\cH$ is {\em Berge-Hamiltonian} if it contains a Hamiltonian Berge cycle. Bermond, Germa, Heydemann, and Sotteau \cite{BGHS} showed a Dirac-type theorem for Berge cycles. We showed in \cite{LW-codegree} that for every finite set $R$ of positive integers, there is an integer $n_0=n_0(R)$ such that every covering $R$-uniform hypergraph $\cH$ on $n$ ($n\geq n_0$) vertices contains a Berge cycle $C_s$ for any $3\leq s\leq n$. In particular, every covering $R$-graph on sufficiently large $n$ vertices is Berge-Hamiltonian.

Extremal problems related to Berge hypergraphs have been receiving increasing attention lately. For Turan-type results, Gy\H{o}ri, Katona and Lemons \cite{GKL} showed that for a $k$-graph $\cH$ containing no Berge path of length $t$, if $t\geq k+2 \geq 5$, then $e(\cH) \leq \frac{n}{t}\binom{t}{k}$; if $3\leq t\leq k$, then $e(\cH) \leq \frac{n(t-1)}{k+1}$.
The remaining case of $t=k+1$ was settled by Davoodi, Gy\H{o}ri, Methuku and Tompkins \cite{Davoodi}.
For long cycles, F\"{u}redi, Kostochka and Luo \cite{FKL} showed that for $k\geq 3$ and $t\geq k+3$, if $\cH$ is an $n$-vertex $k$-graph with no Berge cycle of length at least $t$, then $e(\cH) \leq \frac{n-1}{t-2}\binom{t-1}{k}$. The equality is achieved if and only if $\partial_2(\cH)$ is connected and for every block $D$ of $\partial_2(\cH)$, $D = K_{t-1}$ and $\cH[D] = K_{t-1}^k$. The cases for $t\in \{k+1, k+2\}$ are settled by Ergemlidze et al. \cite{Ergemlidze}. The case when $t= k$ is recently settled by Gy\H{o}ri et al \cite{GLSZ}. For general results on the maximum size of a Berge-$G$-free hypergraph for an arbitrary graph $G$, see for example \cite{GMP, GMT, PTTW}.

For Ramsey-type results, define $R_c^k(BG_1, \ldots, BG_c)$ as the smallest integer $n$ such that  for any $c$-edge-coloring of a complete $k$-uniform hypergraph on $n$ vertices, there exists a Berge-$G_i$ subhypergraph with color $i$ for some $i$. Salia, Tompkins, Wang and Zamora \cite{STWZ} showed that $R_2^3(BK_s, BK_t) = s+t -3$ for $s,t \geq 4$ and $\max(s,t) \geq 5$. For higher uniformity, they showed that $R^4(BK_t, BK_t) = t+1$ for $t \geq 6$ and $R_2^k(BK_t, BK_t)=t$ for $k\geq 5$ and $t$ sufficiently large.
Independently and more generally, Gerbner, Methuku, Omidi and Vizer \cite{GMOV} showed that $R_c^k(BK_n) = n$ if $k > 2c$; $R_c^k(BK_n) = n+1$ if $k = 2c$ and obtained bounded on $R_c^k(BK_n)$ when $k < 2c$. They also determined the exact value of $R_2^3(BT_1, BT_2)$ for every pair of trees. Similar investigations have also been started independently by Axenovich and Gy\'arf\'as \cite{AG} who focus on the Ramsey number of small fixed graphs where the number of colors may go to infinity.

Although it is pleasant to see that the Ramsey number of Berge cliques is linear when the number of colors are not too big relative to the uniformity, the result is also not surprising due to the fact that $K^k_n$ has much more edges than $B K_t$. This motivates us to define a new type of Ramsey number such that the host graph has relatively small number of edges. In particular, inspired by the results in \cite{LW-codegree}, we realize that the covering property of a hypergraph is closely related to finding Berge subhypergraphs. Hence we define a new type of Ramsey number, namely \textit{cover Ramsey number}, denoted as $\br^R(BG_1, BG_2)$, as the smallest integer $n_0$ such that for every covering $R$-uniform hypergraph $\cH$ on $n \geq n_0$ vertices and every $2$-edge-coloring of $\cH$ with blue and red, there is either a blue Berge-$G_1$ or a red Berge-$G_2$ subhypergraph in $\cH$. Note that when $R = \{2\}$,  $\br^R(BG_1, BG_2)$ is exactly the classical Ramsey number. For ease of reference, we use $\br^{k}(BG_1, BG_2)$ to denote $\br^{\{k\}}(BG_1, BG_2)$. It is easy to see that 
$\br^{k}(BG_1, BG_2) \leq \br^{[k]}(BG_1, BG_2).$

Let $R_c(G_1, \ldots, G_c)$ denote the classical multi-color Ramsey number, i.e., the smallest integer $n$ such that any $c$-edge-coloring of $K_n$ contains a monochromatic $G_i$ in the $i$-th color for some $i\in [c]$. When $c = 2$, we simply write $R_2(G_1, G_2)$ as $R(G_1, G_2)$. We first show the following theorem.

\begin{theorem}\label{thm:Ramsey-bound}
For every $k\geq 2$, there exists some constant $c_k$ such that for any two non-empty finite graphs $G_1$ and $G_2$,
\[R(G_1, G_2) \leq \br^{[k]}(BG_1, BG_2) \leq c_k \cdot R(G_1, G_2)^3.\]
\end{theorem}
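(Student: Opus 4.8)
The plan is to prove the two inequalities separately; the left one is immediate and the right one carries all the work. For the lower bound $R(G_1,G_2)\le\br^{[k]}(BG_1,BG_2)$, I would take $n=R(G_1,G_2)-1$ together with a red/blue coloring of $E(K_n)$ having no blue $G_1$ and no red $G_2$, which exists by definition of the classical Ramsey number. Viewing $K_n$ as a $[k]$-graph all of whose edges have size $2$ (legal since $2\in[k]$ for $k\ge2$), this hypergraph is covering, and because every edge has size exactly $2$ the containment $e\subseteq f(e)$ in a Berge-$G$ forces $f(e)=e$; hence a monochromatic Berge-$G_i$ here is literally a monochromatic $G_i$. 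So this colored covering $[k]$-graph on $R(G_1,G_2)-1$ vertices has neither a blue Berge-$G_1$ nor a red Berge-$G_2$, witnessing $\br^{[k]}(BG_1,BG_2)\ge R(G_1,G_2)$.

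For the upper bound the idea is to reduce an arbitrary colored covering $[k]$-graph $\cH$ to the classical two-color Ramsey problem on a carefully chosen vertex subset. Since $\cH$ is covering, every pair $\{u,v\}$ lies in some hyperedge; choose one such hyperedge $\phi(\{u,v\})$ as the \emph{representative} of the pair and color the pair with the red/blue color of $\phi(\{u,v\})$. Each hyperedge $h$ can represent at most $\binom{|h|}{2}\le\binom{k}{2}$ pairs, so in this auxiliary edge-coloring of $K_n$ every color (every hyperedge) is used at most $\binom{k}{2}$ times. The crucial point is that a \emph{rainbow clique} $S$, a set on which all $\binom{|S|}{2}$ pairs receive distinct representatives, lifts perfectly: restricting $\phi$ to the edges of any monochromatic subgraph of $K_S$ gives an injection into distinct hyperedges of one color with $e\subseteq\phi(e)$, i.e. a monochromatic Berge subgraph. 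Thus if I can find a rainbow clique $S$ with $|S|\ge R(G_1,G_2)$, then applying the classical Ramsey theorem to the induced red/blue coloring of $K_S$ produces a blue $G_1$ or red $G_2$, which lifts to a blue Berge-$G_1$ or a red Berge-$G_2$.

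It remains to guarantee a large rainbow clique, and here I would use a short probabilistic deletion argument. Sample a random vertex subset including each vertex independently with probability $p=n^{-2/3}$, so the expected size is $n^{1/3}$. Call two distinct pairs a \emph{conflict} if they share the same representative; since representatives partition the pairs into color classes of size $m_h\le\binom{k}{2}$, the number of conflicts in $K_n$ is $\sum_h\binom{m_h}{2}\le\tfrac12\binom{k}{2}\binom{n}{2}=O(k^2n^2)$. Each conflict spans at least three vertices and hence survives in the sample with probability at most $p^3$, so the expected number of surviving conflicts is $O(k^2n^2p^3)=O(k^2)$. Deleting one vertex from each surviving conflict destroys all of them and leaves a rainbow clique of expected size at least $n^{1/3}-O(k^2)\ge\tfrac12 n^{1/3}$ once $n\ge n_0(k)$. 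Choosing $c_k$ so that $n\ge c_k\,R(G_1,G_2)^3$ forces both $n\ge n_0(k)$ and $\tfrac12 n^{1/3}\ge R(G_1,G_2)$ then finishes the proof.

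The main obstacle is precisely the distinctness of the representatives: the covering property only assigns a color to each pair, and without care a monochromatic subgraph produced by Ramsey could reuse one hyperedge for many edges, which is forbidden in a Berge copy. The rainbow-clique reduction isolates exactly the property that is needed, and the bounded color multiplicity $\binom{k}{2}$, a consequence of bounded edge size, is what keeps the conflict count quadratic in $n$. In fact optimizing $p\asymp(k^2n)^{-1/2}$ yields a rainbow clique of size $\Omega_k(n^{1/2})$, so the same argument gives the stronger bound $\br^{[k]}(BG_1,BG_2)=O_k(R(G_1,G_2)^2)$, with the stated cubic bound following a fortiori.
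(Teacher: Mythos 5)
Your proof is correct, and while it shares the paper's overall strategy --- find a vertex subset $S$ of size $R(G_1,G_2)$ on which the hypergraph induces an honest $2$-colored complete graph whose monochromatic subgraphs lift to Berge copies, then invoke the classical Ramsey theorem --- the way you produce $S$ is genuinely different. The paper first passes to an edge-minimal covering subhypergraph (so that the number of hyperedges is at most $\binom{n}{2}$), draws $S$ uniformly among $s$-subsets, and takes a union bound over the events that some hyperedge meets $S$ in three or more points; the condition that every hyperedge meets $S$ in at most two points then forces the trace map to be injective. You instead fix a representative hyperedge for every pair up front, only ask that the representatives be pairwise distinct on $S$ (a strictly weaker, and exactly sufficient, condition for the Berge lift), and obtain such an $S$ by independent sampling followed by deleting one vertex per surviving conflict. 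This buys you two things: you never need the edge-minimality reduction, because the conflict count $\sum_h\binom{m_h}{2}\le\frac12\binom{k}{2}\binom{n}{2}$ is controlled by the representative map rather than by the number of hyperedges; and, as you note at the end, optimizing the sampling probability yields a rainbow clique of size $\Omega_k(\sqrt{n})$ and hence the stronger bound $\br^{[k]}(BG_1,BG_2)=O_k\bigl(R(G_1,G_2)^2\bigr)$, improving the paper's cubic bound. Your lower bound argument is the same as the paper's, just with the extremal coloring of $K_{R(G_1,G_2)-1}$ made explicit as a covering $[k]$-graph whose edges all have size two.
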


Theorem \ref{thm:Ramsey-bound} implies $\br^{R}(BG_1, BG_2)$ is always finite, thus well-defined. In fact, let $k$ be the greatest integer in $R$. We have $R\subseteq [k]$ and
$$ \br^{R}(BG_1, BG_2)\leq  \br^{[k]}(BG_1, BG_2)\leq c_k \cdot R(G_1, G_2)^3.$$

Note that Theorem \ref{thm:Ramsey-bound} doesn't give a lower bound for $\br^k(BG_1, BG_2)$.
For complete graphs $K_t$, we show that the cover Ramsey number of Berge cliques is at least exponential in $t$. Note that this is very different from the hypergraph Ramsey number of Berge cliques (see \cite{STWZ} and \cite{GMOV}), which is linear.

\begin{theorem}\label{thm:exp-lower}
For every $k \geq 2$ and sufficiently large $t$, we have that 
 \[ \br^{k}(BK_t, BK_t) > (1+o(1))\frac{\sqrt{2}}{e} t2^{t/2}.\]
\end{theorem}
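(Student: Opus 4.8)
The plan is to prove the lower bound constructively: for $n=(1-o(1))\frac{\sqrt2}{e}t2^{t/2}$ I will exhibit a covering $k$-graph $\cH$ on $n$ vertices, together with a red/blue coloring of its hyperedges, that contains neither a blue nor a red Berge-$K_t$. This witnesses $\br^{k}(BK_t,BK_t)>n$, and since this value of $n$ is exactly Spencer's local-lemma lower bound for $R(t,t)$, it yields the stated estimate.

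First I would record a clean sufficient condition. In any $2$-coloring of the hyperedges of a covering $k$-graph, a blue Berge-$K_t$ requires $t$ vertices every pair of which lies in a blue hyperedge; in particular those $t$ vertices span a clique in the shadow $\partial_2$ of the blue subhypergraph. Hence if both the blue shadow and the red shadow are $K_t$-free, then $\cH$ has no monochromatic Berge-$K_t$. This reduces the problem to a purely graph-theoretic statement: it suffices to find a coloring $c\colon E(K_n)\to\{\text{red},\text{blue}\}$ such that (a) neither color class contains a $K_t$, and (b) every edge of $K_n$ lies in a monochromatic $K_k$ of its own color. From such a $c$ I build $\cH$ by choosing, for each edge, one monochromatic $K_k$ containing it, taking these $k$-sets as the hyperedges, and coloring each hyperedge by the common color of its vertices: property (b) makes $\cH$ covering, while each color's shadow lies inside that color class, which is $K_t$-free by (a).

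It remains to produce a coloring satisfying (a) and (b) with $n$ as large as the Spencer bound, which I would do via the Lov\'asz Local Lemma applied to a uniformly random red/blue coloring. The "clique" bad events $A_S$, ranging over $t$-sets $S$, are that $S$ is monochromatic, with $\Pr[A_S]=2^{1-\binom t2}$; treated in isolation these are exactly Spencer's events and give the threshold $n=(1-o(1))\frac{\sqrt2}{e}t2^{t/2}$. To enforce (b) I add, for each edge $e$, a bad event $B_e$ that $e$ lies in no monochromatic $K_k$ of its color. The delicate point---and the main obstacle---is that if $B_e$ is allowed to depend on all edges, its dependency degree is astronomically large and the local-lemma inequalities for the $A_S$ collapse. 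I would circumvent this by preassigning to each $e$ a family of $r=\Theta(t)$ pairwise-disjoint $(k-2)$-sets as candidate clique-completers and letting $B_e$ depend only on the $O(rk^2)=O(t)$ edges these witnesses span. Conditioned on the color of $e$, the $r$ completion events are independent, each of constant probability $2^{1-\binom k2}$, so $\Pr[B_e]\le(1-2^{1-\binom k2})^{r}$ is exponentially small in $t$ while $B_e$ now involves only $O(t)$ vertices.

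Finally I would verify the asymmetric local lemma for the combined system. Spreading the witness assignment so that each vertex is a completer for comparatively few edges keeps both the number of $B_e$ adjacent to a given $A_S$ and the number of $A_S$ adjacent to a given $B_e$ small enough that the extra factors they contribute to the local-lemma products are bounded below by positive constants. Since a constant-factor change in these conditions perturbs the critical $n$ only by a factor $1+O(1/t)=1+o(1)$ (because $n$ enters through $n^{t-2}$), the threshold remains $(1-o(1))\frac{\sqrt2}{e}t2^{t/2}$. This produces a coloring with properties (a) and (b), hence the desired covering $k$-graph, completing the proof. I expect the crux of the write-up to be the bookkeeping in this last step: choosing $r$ large enough to make $\Pr[B_e]$ negligible yet small enough to keep all dependencies under control, and fixing a sufficiently spread witness assignment.
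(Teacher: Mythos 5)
Your approach is sound but genuinely different from the paper's. The paper colors the \emph{hyperedges} of a fixed covering $k$-graph, namely a resolvable ${\rm BIBD}(n,k,1)$, so that every vertex pair lies in \emph{exactly one} hyperedge; then a monochromatic Berge-$K_t$ on a $t$-set $S$ is forced to use the $\binom{t}{2}$ uniquely determined hyperedges covering the pairs of $S$, giving $\Prob{A_S}\leq 2^{1-\binom{t}{2}}$ at once, and a single application of the \emph{symmetric} Lov\'asz Local Lemma with dependency degree $\binom{t}{2}\binom{k}{2}\binom{n-2}{t-2}$ finishes the proof. You instead color the edges of $K_n$, reduce ``no monochromatic Berge-$K_t$'' to ``both color classes are $K_t$-free'' via the correct observation that the core of a Berge-$K_t$ is a clique in the shadow of that color, and then must additionally certify coverability by monochromatic $k$-cliques, which forces the extra events $B_e$, a spread witness assignment, and the asymmetric local lemma. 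What the paper's route buys is brevity and a trivial probability computation, at the cost of invoking design-existence theorems (and a congruence restriction on $n$, harmless at the $(1+o(1))$ level); your route is self-contained combinatorially but shifts all the difficulty into the bookkeeping you flag at the end. That bookkeeping does appear to close: with $r=\Theta(t)$ disjoint witnesses one gets $\Prob{B_e}=(1-2^{1-\binom{k}{2}})^{r}=2^{-\Omega(t)}$, a random witness assignment makes each vertex pair a dependency edge of only $O(t)$ events $B_e$ (so an $A_S$ sees $O(t^3)$ of them and a $B_e$ sees $O(t)\binom{n-2}{t-2}$ events $A_T$), and these contribute only $1-o(1)$ factors to the local-lemma products, perturbing the threshold by $1+O(1/t)$ as you say. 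So the plan is correct, but be aware that this last step --- the concentration argument for the spread assignment and the full asymmetric LLL verification --- is real work that the written proof must actually carry out, whereas the paper's design-based setup avoids it entirely.
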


\begin{remark}
For a fixed $t$ and $R \subseteq [k]$, let $N(t)$ be the set of integers $n$ such that for every covering $R$-uniform hypergraph $\cH$ on $n$ vertices and every $2$-edge-coloring of $\cH$, there is a monochromatic Berge-$K_t$. We remark that $N(t)$ may not be a single interval. However, by Theorem \ref{thm:Ramsey-bound}, there exists some $n_0$ such that $[n_0, \infty) \subseteq N(t)$.
\end{remark}

For a graph $G$ with bounded maximum degree, Chv\'atal, R\"{o}dl, Szemer\'edi and Trotter showed in \cite{CRST} that for each positive integer $d$, there exists a constant $c = c(d)$ such that if $G$ is a graph on $n$ vertices with $\Delta(G)\leq d$, then $R(G,G) \leq cn$. In this note, we show that the cover Ramsey number of Berge bounded-degree graphs is also linear. The proof uses a modification of the proof of Chv\'atal, R\"{o}dl, Szemer\'edi and Trotter in \cite{CRST} that allows for more than two colors. 

\begin{theorem}\label{thm:bounded-degree}
For each positive integer $d$ and $k$, there exists a constant $c = c(d,k)$ such that if $G$ is a graph on $n$ vertices with maximum degree at most $d$, then 
\[\br^{[k]}(BG,BG) \leq cn.\]
\end{theorem}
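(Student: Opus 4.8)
The plan is to reduce the hypergraph statement to a \emph{multicolour} version of the Chvátal–Rödl–Szemerédi–Trotter theorem, applied to an auxiliary colouring of the complete graph $K_N$ on $V(\cH)$ with $N = cn$. The colouring will use a bounded number $L = L(k)$ of colours, designed so that any monochromatic copy of $G$ in $K_N$ can be upgraded to a monochromatic Berge-$G$ in $\cH$. The one genuinely new difficulty beyond CRST is the \emph{injectivity} demanded by Berge-$G$: distinct edges of the copy of $G$ must be realised by \emph{distinct} hyperedges of a single colour. The palette is engineered precisely to make this automatic, and this is why more than two colours are needed even though $\cH$ is only $2$-coloured.

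For a pair $p \in \binom{V(\cH)}{2}$, let $\beta(p)$ and $\rho(p)$ count the blue and red hyperedges containing $p$; since $\cH$ is covering, $\beta(p) + \rho(p) \ge 1$. Put $T = \binom{k}{2}$, the largest number of pairs a single hyperedge can cover. Call $p$ \emph{blue-rich} if $\beta(p) \ge T$, \emph{red-rich} if $\rho(p) \ge T$, and \emph{poor} otherwise, so a poor pair lies in fewer than $2T$ hyperedges. Say two pairs \emph{conflict} if some single hyperedge contains both; a poor pair conflicts with fewer than $2T^2$ other pairs, so the conflict graph restricted to poor pairs has maximum degree below $2T^2$ and admits a proper colouring $\psi$ into $L_0 \le 2T^2$ classes. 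I would then colour $K_N$ thus: a blue-rich pair gets colour $\mathsf{B}$; a red-rich pair that is not blue-rich gets $\mathsf{R}$; and a poor pair $p$ gets $(\text{blue},\psi(p))$ if $\beta(p)\ge 1$ and $(\text{red},\psi(p))$ otherwise. This uses $L = 2 + 2L_0 = O(k^4)$ colours.

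Next, apply the multicolour bounded-degree Ramsey theorem: choosing $c = c(d,k)$ so that $N \ge R_L(G,\dots,G)$ produces a copy $\phi$ of $G$ in $K_N$ whose edges all receive one common colour, and in every case this yields a monochromatic Berge-$G$. If the common colour is $\mathsf{B}$, each edge of $\phi(G)$ is covered by at least $T$ blue hyperedges while each blue hyperedge covers at most $T$ edges of $\phi(G)$; hence the bipartite incidence graph between $E(G)$ and the blue hyperedges satisfies $|N(S)|\ge|S|$ for all $S$, so Hall's theorem gives a system of distinct representatives and a blue Berge-$G$ (and $\mathsf{R}$ is symmetric). If the common colour is $(\text{blue},i)$, all edges of $\phi(G)$ are poor with a blue hyperedge available and lie in a single $\psi$-class; being in one $\psi$-class they pairwise do not conflict, so no hyperedge contains two of them, and selecting any blue hyperedge per edge yields distinct hyperedges, i.e.\ a blue Berge-$G$ (with $(\text{red},i)$ symmetric). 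Thus the two regimes are handled by two different mechanisms: a matching/Hall argument for rich edges and a proper-colouring–enforced spread-out argument for poor edges.

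It remains to supply the bound $R_L(G,\dots,G) \le c(d,L)n$, which is the main technical ingredient and is exactly the "modification of CRST allowing more than two colours." I would follow the regularity proof: apply the Szemerédi regularity lemma to the $L$-coloured $K_N$ to obtain a bounded number $M$ of clusters with almost all pairs $\varepsilon$-regular in every colour; assign each regular pair a plurality colour (density $\ge 1/L$); since the reduced graph is almost complete, taking $M \ge R_L(K_{d+1},\dots,K_{d+1})$ forces a monochromatic $K_{d+1}$ on the clusters; and finally embed $G$, whose chromatic number is at most $d+1$, into those $d+1$ clusters via the embedding (key) lemma, which succeeds once the clusters are large (large $c$) and $\varepsilon$ is small relative to $1/L$ and $d$. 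I expect the regularity step to be routine; the crux of the argument is the design of the auxiliary colouring, namely keeping the palette bounded in $k$ while guaranteeing that each monochromatic class forces distinct representing hyperedges.
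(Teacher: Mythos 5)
Your proposal is correct, and its skeleton is the same as the paper's: reduce to a multicolour version of the Chv\'atal--R\"odl--Szemer\'edi--Trotter theorem by colouring $K_N$ on $V(\cH)$ with $O_k(1)$ colours so that a monochromatic copy of $G$ lifts to a monochromatic Berge-$G$, and prove that multicolour theorem by the standard regularity argument (which is exactly the paper's Lemma~\ref{lem:reg} and Theorem~\ref{thm:CRST}). Where you diverge is in the design of the auxiliary palette and the mechanism that forces distinct representing hyperedges. The paper does this with a single local trick: inside each hyperedge $h$ it assigns each pair $uv\subseteq h$ a distinct label $\phi_h(uv)\in[\binom{k}{2}]$, picks one covering hyperedge per pair, and colours the pair by (colour of $h$, $\phi_h(uv)$); then two edges of a monochromatic $G$ cannot share a representing hyperedge because they would carry different labels. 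This uses $2\binom{k}{2}$ colours and needs no case analysis. Your rich/poor dichotomy with Hall's theorem for rich pairs and a proper colouring of the conflict graph for poor pairs is sound --- the degree bound $<2T^2$ on the conflict graph of poor pairs, the Hall condition $|N(S)|\geq |S|$ from the $\geq T$ versus $\leq T$ incidence counts, and the non-conflict guarantee within a $\psi$-class all check out --- but it costs $O(k^4)$ colours and two separate injectivity arguments where one suffices. The only loose point is your appeal to ``the embedding (key) lemma'' for a spanning bounded-degree $G$: the standard key lemma handles fixed small graphs, and what is actually needed (and what the paper writes out) is the greedy vertex-by-vertex embedding of CRST; since you explicitly defer to that proof, this is a presentational rather than a substantive gap.
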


Theorem \ref{thm:bounded-degree} implies that for fixed integers $k$ and $d$, there is a constant $c:=c(d,k)$ such that $\br^{[k]}(BG,BG)\leq c R(G,G)$ holds for any graph $G$ with maximum degree at most $d$.
It is an interesting question whether
$\ds\lim_{t\to\infty}\frac{\br^{[k]}(BK_t,BK_t)}{R(K_t,K_t)}=\infty$
for all $k\geq 3$.

\section{Proof of Theorem \ref{thm:Ramsey-bound}}
\begin{proof}[Proof of Theorem \ref{thm:Ramsey-bound}]
The lower bound that $\br^{[k]}(BG_1, BG_2) \geq R(G_1, G_2)$ is clear from the definition since $R(G_1, G_2) =  \br^{\{2\}}(BG_1, BG_2) \leq  \br^{[k]}(BG_1, BG_2)$.

For the upper bound, given $k \geq 2$, set $c_k = k^3/12$. Let $\cH=(V,E)$ be a $2$-edge-colored $R$-graph on $n = c_k R(G_1, G_2)^3$ vertices.
Assume further that $\cH$ is edge-minimal with respect to the covering property. Suppose $E = \{h_1, h_2, \ldots, h_m\}$ where $m = |E|$. Since $\cH$ is edge-minimal and covering, it follows that 
$\binom{n}{2}/\binom{k}{2} \leq m \leq \binom{n}{2}$.

Now let $S\subseteq V$ be a uniformly and randomly chosen subset of $V$ of size $s = R(G_1, G_2)$. For each $i\in [m]$, let $B_i$ be the event that $|h_i \cap S| \geq 3$. It is not hard to see that \[
    \Prob{B_i} \leq \binom{k}{3}\frac{\binom{n-3}{s-3}}{\binom{n}{s}}. 
\]
Taking a union bound over all $B_i$, we have that \begin{align*}
    \Prob{B_1 \vee \ldots \vee B_m} &\leq \binom{n}{2}\binom{k}{3}\frac{\binom{n-3}{s-3}}{\binom{n}{s}}\\
    &= 3\frac{\binom{k}{3}\binom{s}{3}}{n-2}\\
    &< 1.
\end{align*}
The last step is due to the following inequality:
$$n=\frac{k^3}{12}s^3
\geq 3\lp\binom{k}{3}+1\rp
\lp\binom{s}{3}+1\rp > 3+ 3\binom{k}{3} \binom{s}{3}
$$
for any $k\geq 2$ and $s\geq 2$.
 Hence with positive probability, there exists $S \subseteq V$ with $|S| = R(G_1,G_2)$ such that every hyperedge intersects $S$ in at most $2$ points. Now consider the trace of $\cH$ on $S$, denoted by $G = \cH_S$. By the covering property and the choice of $S$, $G$ is a complete  graph (ignoring edges of cardinality $1$). Recall that by definition, $E(G) = \{h \cap S: h \in E(\cH)\}$. Hence for each edge $e \in G$, there exists some $h = \phi(e) \in E(\cH)$ such that $e = h\cap S$. Moreover, for $e_1\neq e_2$, $\phi(e_1) \neq \phi(e_2)$ due to the choice of $S$. Now for each edge $e\in E(G)$, color the edge $e$ with the same color of $\phi(e)$ in $\cH$. Since $|S| = R(G_1, G_2)$, it follows that there exists either a blue $G_1$ or a red $G_2$ in $G$, which corresponds to a blue Berge $G_1$ or a red Berge $G_2$ in $\cH$. This shows that $\br^{[k]}(BG_1, BG_2) \leq k^3/12 \cdot R(G_1, G_2)^3.$
\end{proof}

\section{Proof of Theorem \ref{thm:exp-lower}}

The construction comes from a random $2$-edge-coloring of a covering $k$-unifrom hypergraph that is obtained from a combinatorial design. 

A {\it resolvable} BIBD, denoted as ${\rm BIBD}(n, k,\lambda)$, is a collection $P_1,\ldots,P_m$ of partitions of an underlying $n$-element set into  $k$-element subsets such that every $2$-element subset of the $n$-element set is contained by exactly $\lambda$ of the  $\frac{mn}{k}$ $k$-element sets listed in the partitions. We restrict ourselves to $\lambda=1$, that is, each $2$-element subset of the $n$-element set is contained in one and only one of the $k$-element sets listed in the partitions. 
 
Note that the existence of such a design implies that $|P_i|=\frac{n}{k}$ and $m\frac{n}{k}\binom{k}{2}=\binom{n}{2}$, i.e. $m=\frac{n-1}{k-1}$, which gives the well known necessary condition that $n\equiv k \pmod{k(k-1)}$ for the existence of such a resolvable BIBD. For the $k=3$ case (which is commonly called a Kirkman triple system to honor Kirkman~\cite{kirkman} who posed the problem) it is also a sufficient condition \cite{kirk}, and for $k=4$ the corresponding $n\equiv 4 \pmod{12}$ is also a sufficient condition \cite{hanani}. For every $k$, the congruence is also a sufficient condition for all $n>n_0(k)$ \cite{RV}. Also, for every even $k\geq 4$, the congruence implies existence for $n>\exp\{\exp\{k^{18k^2}\}\}$ \cite{chang}. 

\begin{proof}[Proof of Theorem \ref{thm:exp-lower}]
For a fixed $k\geq 2$, let $t_0$ be sufficiently large such that for all $n\geq (1+o(1))\frac{t_0\sqrt{2}}{e}2^{t_0/2}$ and $n\equiv k \pmod{k(k-1)}$, a resolvable BIBD $(n,k,1)$ exists.

Let $t\geq t_0$ and $n=(1+o(1))\frac{t\sqrt{2}}{e} 2^{t/2}$. Assume that $n$ is an integer such that a resolvable BIBD $(n,k,1)$ exists. Let $\cH = (V,E)$ be a $k$-uniform hypergraph such that $V$ is the underlying $n$-element set of the resolvable BIBD $(n,k,1)$ and $E$ is the collection of $k$-element sets listed in the partitions $P_1, \ldots, P_m$. Note that by the definition of $(n,k,1)$, $\cH$ is a covering $k$-graph with $\binom{n}{2}/\binom{k}{2}$ edges and every vertex pair of $\cH$ is contained in exactly one hyperedge. 

 Our goal is to construct a coloring of $\cH$ with no monochromatic $BK_t$ as subhypergraph. Color each hyperedge of $\cH$ in blue and red uniformly and randomly with probability $1/2$.
For any set $S$ of $t$ vertices, let $A_S$ be the bad event that $S$ induces a monochromatic $BK_t$. We will apply the Lovasz Local Lemma to show that we can avoid all bad events $\{A_S\colon S\subseteq V \mbox{ and } |S|=t\}$.

Note that by the definition of $(n,k,1)$, for each vertex pair of $S$, there exists a unique hyperedge containing that vertex pair. Hence there is at most one Berge-$K_t$ with $S$ as the underlying vertex set. Furthermore, if there is a Berge-$K_t$ with $S$ as the underlying vertex set, then the hyperedges containing the vertex pairs of $S$ are all distinct. 
Hence 
\[\Prob{A_S }=\left\{\begin{array}[c]{ll}
2^{1-\binom{t}{2}} &\mbox{ if there is no $h \in E(\cH)$ such that $|h\cap S|\geq 3$} ,\\
0 & \mbox{ otherwise}.
\end{array}\right.
\]
Two bad events $A_S$ and $A_T$ are independent if there is no edge $f$ intersecting both $S$ and $T$ on exactly two vertices. For a fixed event $A_S$, the number $d$ of bad events $A_T$ dependent  on $A_S$ satisfies
$$d\leq \binom{t}{2}\binom{k}{2}\binom{n-2}{t-2}-1.$$
Applying the symmetric version of the Lovasz Local Lemma \cite{Spencer}, if
$e(d+1)\Prob{A_S}<1$ for all $S$, then
$\Prob{\bigwedge_{S}\overline{A_S}}>0$.

It suffices to have 
\[e\cdot \binom{t}{2}\binom{k}{2}\binom{n-2}{t-2}2^{1-\binom{t}{2}}<1,
\]
which is satisfied if we choose 
$n=(1+o(1))\frac{\sqrt{2}}{e} t 2^{t/2}$.
Hence there exists a coloring of $\cH$ with no monochromatic Berge $K_t$ as subhypergraph. It follows by definition that $\br^k(BK_t, BK_t)> (1+o(1))\frac{\sqrt{2}}{e} t 2^{t/2}$.

\end{proof}

\section{Proof of Theorem \ref{thm:bounded-degree}}

The proof of Theorem \ref{thm:bounded-degree} uses a modification of the proof Chv\'atal, R\"{o}dl, Szemer\'edi and Trotter in \cite{CRST} to allow for more than two colors. For the reason of self-completeness, we state and give the details in this section. Let $R_c(G)$ denote the multicolor Ramsey number $R_c(G, G, \ldots, G)$.

\begin{theorem}\cite{CRST}\label{thm:CRST}
For each positive integer $c$ and $d$, there exists a constant $C = C(c,d)$ such that if $G$ is a graph on $n$ vertices with maximum degree at most $d$, then $R_c(G) \leq Cn$.
\end{theorem}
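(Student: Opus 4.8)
The plan is to follow the regularity-method proof of Chv\'atal, R\"odl, Szemer\'edi and Trotter, replacing its single two-colour step by a multicolour one. Fix $c$ and $d$, let $G$ be a graph on $n$ vertices with $\Delta(G)\le d$, and consider an arbitrary $c$-edge-colouring of $K_N$. Since $\Delta(G)\le d$, $G$ is properly $(d+1)$-colourable, so it suffices to embed $G$ into a monochromatic blow-up of $K_{d+1}$. The three ingredients are the multicolour Szemer\'edi regularity lemma, the ordinary finite Ramsey number $R_c(K_{d+1})$ applied to a clique of clusters, and a greedy embedding argument for bounded-degree graphs into regular pairs.

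First I would fix the auxiliary constants in order. Set $r=R_c(K_{d+1})$, the finite $c$-colour Ramsey number of the clique, and set the density threshold $\delta=1/(2c)$. Choose $\epsilon=\epsilon(c,d)>0$ small enough for both the Tur\'an and the embedding steps below (concretely $\epsilon$ small compared with $1/r$ and with $(\delta/2)^d$). Apply the regularity lemma simultaneously in all $c$ colours (taking a common refinement of the per-colour partitions) to obtain a bounded number $M\le M_1(\epsilon,r)$ of clusters $V_1,\dots,V_M$, each of size at least $(1-\epsilon)N/M_1$, such that all but at most $\epsilon M^2$ pairs are $\epsilon$-regular in every colour at once.

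Next I would locate a monochromatic clique in the cluster structure. Since at most $\epsilon M^2$ pairs fail to be regular in some colour, Tur\'an's theorem guarantees, for $\epsilon$ small and $M$ large relative to $r$, a set of at least $r$ clusters that are pairwise $\epsilon$-regular in every colour. On each such pair at least one colour has density at least $1/c\ge\delta$; assigning a dominant colour to each pair produces a $c$-colouring of a clique on at least $r=R_c(K_{d+1})$ clusters, so Ramsey's theorem yields clusters $V_{j_0},\dots,V_{j_d}$ that are pairwise $\epsilon$-regular with density at least $\delta$ in a common colour $i$.

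The heart of the argument, and the step I expect to be the main obstacle, is the embedding of $G$ into the colour-$i$ bipartite graphs spanned by $V_{j_0},\dots,V_{j_d}$. Fix a proper colouring of $V(G)$ with classes $W_0,\dots,W_d$ and aim to map $W_\ell$ into $V_{j_\ell}$. Process the vertices of $G$ in any order, maintaining for each unembedded vertex $v$ a candidate set equal to the intersection, over all already-embedded neighbours $u$ of $v$, of the colour-$i$ neighbourhood of the image of $u$ inside the target cluster of $v$. Because $\Delta(G)\le d$, at most $d$ such constraints ever act on one vertex, and the $\epsilon$-regularity of each pair ensures that each constraint discards only an $\epsilon$-fraction of the relevant cluster except for a small set of bad vertices; hence every candidate set keeps size at least $\bigl((\delta-\epsilon)^d-d\epsilon\bigr)|V_{j_\ell}|$ minus the at most $n$ vertices of $V_{j_\ell}$ already used. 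Keeping these sets nonempty throughout is exactly where $\epsilon$ must be small and the clusters large: forcing $|V_{j_\ell}|\ge(1-\epsilon)N/M_1$ to exceed $2n/(\delta-\epsilon)^d$ requires $N\ge Cn$ for a constant $C=C(c,d)$ depending only on $M_1$, $\delta$ and $d$. With this choice the greedy embedding never stalls, yielding a monochromatic copy of $G$ in colour $i$ and therefore $R_c(G)\le Cn$.
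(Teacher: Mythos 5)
Your proposal follows essentially the same route as the paper's proof: multicolour regularity lemma, Tur\'an's theorem to extract a clique of pairwise $\epsilon$-regular clusters of size $R_c(K_{d+1})$, majority-colouring of cluster pairs plus Ramsey to get $d+1$ clusters dense in a common colour, and a greedy embedding of the bounded-degree graph. The only cosmetic difference is that you pre-assign vertices of $G$ to clusters via a proper $(d+1)$-colouring, whereas the paper chooses a free cluster on the fly for each vertex; both are standard and correct.
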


We first show how Theorem \ref{thm:CRST} implies Theorem \ref{thm:bounded-degree}.

\begin{proof}[Proof of Theorem \ref{thm:bounded-degree}]
For fixed positive integers $d$ and $k$, let $C = C(2\binom{k}{2},d)$ be the constant obtained from Theorem \ref{thm:CRST}. We will show that if $G$ is a graph on $n$ vertices with maximum degree at most $d$, then $\br^{[k]}(BG,BG) \leq Cn$.

Let $\cH = (V,E)$ be a $2$-edged-colored covering $[k]$-graph on $N = Cn$ vertices. Suppose $E = \{h_1, \ldots, h_m\}$. For each $h_i$, give each vertex pair $uv \subseteq h_i$ a unique label $\phi_{h_i}(uv)$ in $[\binom{k}{2}]$. Now consider a $2\binom{k}{2}$-edge coloring of $K_N$: for each $uv \in E(K_N)$, pick an arbitrary hyperedge $h \in E(\cH)$ such that $\{u,v\} \subseteq h$. Such $h$ exists since $\cH$ is covering. If $h$ is colored blue in $\cH$, then color $uv \in E(K_N)$ with a color represented by the ordered pair $(1, \phi_h(uv))$; if $h$ is red in $\cH$, then color $uv$ with a color represented by the ordered pair $(2, \phi_h(uv))$. Note that $K_N$ is a $2\binom{k}{2}$-edge-colored graph. Since $N = Cn$, by the definition of multi-color Ramsey number, it follows that if $G$ is a graph on $n$ vertices with maximum degree at most $d$, then $K_N$ contains a monochromatic $G$ as subgraph. WLOG, suppose $G$ is colored $(1, r)$ where $1\leq r \leq \binom{k}{2}$. Now by our construction, for each $e \in E(G)$, there exists hyperedge $h = h(e)$ such that $h$ is colored blue in $\cH$. Moreover we claim that for $e_1 \neq e_2 \in E(G)$, $h(e_1) \neq h(e_2)$. Suppose not, i.e., $h$ contains both $e_1$ and $e_2$. Then $\phi_h(e_1) \neq \phi_h(e_2)$, which contradicts that $e_1, e_2$ receives the same color in $K_N$. Hence, it follows that we can find a monochromatic Berge copy of $G$ in $\cH$.
\end{proof}

In the remaining of this section, we will give a proof of Theorem \ref{thm:CRST}. We remark again that the proof follows along the same line of \cite{CRST} and we are only giving the details here for the sake of self-completeness. 

As suggested by \cite{CRST}, the proof requires a generalization of the regularity lemma, which is an easy modification of the original proof in \cite{Szemeredi}. Given a graph $G$, let $V(G) = A_1 \cup A_2 \cup \cdots \cup A_k$ be a partition of $V(G)$ into disjoint subsets. We call such partition \textit{equipartite} if $\abs{|V_i|-|V_j|} \leq 1$ for all $i,j \in [k]$. Moreover, given two disjoint sets $X,Y \subseteq V(G)$, the \textit{edge density} of $(X,Y)$, denoted as $d(X,Y)$, is defined as $d(X,Y) = |e(X,Y)|/|X||Y|$ where $e(X,Y) = \{xy \in E(G): x \in X, y\in Y\}$. 

\begin{lemma}\label{lem:reg}
For every $\epsilon > 0$ and integers $c, m$, there exists an $M$ and $N_0$ such that if the edges of a graph $G$ on $n\geq N_0$ vertices are $c$-colored, then there exists an equipartite partition $V(G) = A_1 \cup A_2 \cup \ldots \cup A_k$ for some $m\leq k \leq M$, such that all but at most $\epsilon k^2$ pairs $(A_i, A_j)$ are $\epsilon$-regular: for every $X \subseteq A_i$ and $Y \subseteq A_j$ with $|X| \geq \epsilon|A_i|$, $|Y| \geq \epsilon|A_j|$, we have 
\[ \abs{d_s(X,Y) - d_s(X,Y)} < \epsilon\]
for each $s \in [c]$ where $d_s$ is the edge-density in the $s$-th color.
\end{lemma}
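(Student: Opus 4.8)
The plan is to follow the standard energy-increment (index) proof of Szemer\'edi's regularity lemma, the only modification being that the mean-square index of a partition is summed over all $c$ colors. Given a partition $\cP = \{A_1, \ldots, A_k\}$ of $V(G)$, I would define
\[
q(\cP) = \sum_{s=1}^c \sum_{i < j} \frac{|A_i|\,|A_j|}{n^2}\, d_s(A_i, A_j)^2.
\]
The observation that makes the multicolor extension painless is that for any two parts $\sum_{s=1}^c d_s(A_i, A_j) = d(A_i, A_j) \le 1$, so by nonnegativity of the $d_s$ one has $\sum_s d_s(A_i,A_j)^2 \le 1$, and hence $q(\cP) \le 1$ for every partition. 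Thus $q$ is a bounded potential, exactly as in the single-color case.

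Next I would record the three standard monotonicity and increment facts, checked color by color. First, refining $\cP$ never decreases $q$: this is the defect form of the Cauchy--Schwarz inequality applied separately in each color $s$ and summed. Second, if a pair $(A_i, A_j)$ fails to be $\epsilon$-regular, then by definition it is irregular in at least one color $s$, witnessed by sets $X \subseteq A_i$, $Y \subseteq A_j$ with $|X| \ge \epsilon|A_i|$, $|Y| \ge \epsilon|A_j|$ and $|d_s(X,Y) - d_s(A_i, A_j)| \ge \epsilon$; cutting $A_i$ along $X$ and $A_j$ along $Y$ then increases the color-$s$ contribution of this pair by at least $\epsilon^4 \frac{|A_i|\,|A_j|}{n^2}$, while by the first fact no other color's contribution decreases, so the total $q$ increases by at least $\epsilon^4 \frac{|A_i|\,|A_j|}{n^2}$. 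Third, if more than $\epsilon k^2$ pairs are irregular, performing all these cuts simultaneously (passing to a common refinement) increases $q$ by at least a constant $\delta = \delta(\epsilon) > 0$ independent of $k$ and $n$.

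I would then run the iteration: start from an arbitrary partition into $m$ parts, and as long as more than $\epsilon k^2$ pairs are irregular, pass to the common refinement above, gaining at least $\delta$ in $q$ each time. Since $0 \le q \le 1$, the process halts after at most $1/\delta$ steps, producing a partition in which all but $\epsilon k^2$ pairs are $\epsilon$-regular in every color simultaneously. The bound $M$ on the number of parts is obtained exactly as in the one-color proof by tracking how many pieces each part can be cut into per step; the only change is that a part may now be split according to witnesses coming from up to $c$ colors, which multiplies the per-step blow-up by a factor depending only on $c$, still yielding a bound $M = M(\epsilon, c, m)$ of tower type. Finally, a routine equalization step redistributes vertices to restore the equipartite condition while affecting the densities---and hence the regularity---negligibly, and choosing $N_0$ large absorbs these errors.

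The argument contains no genuine obstacle: every step is a direct transcription of the classical proof. The one point deserving explicit care, which I would flag, is the second fact above---that witnessing irregularity in a single color $s$ already forces a net increase of the full, color-summed index $q$. This is precisely where the per-color monotonicity under refinement is used, so that the $\epsilon^4$ gain in color $s$ is not cancelled by losses in the other colors. The boundedness $q(\cP) \le 1$, which relies on $\sum_s d_s \le 1$, is the other place where the multicolor setting must be verified rather than assumed.
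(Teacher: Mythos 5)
Your proposal is correct: the paper does not actually write out a proof of Lemma~\ref{lem:reg}, stating only that it is ``an easy modification of the original proof'' of Szemer\'edi's regularity lemma, and your color-summed index argument is precisely that standard modification, with the two multicolor-specific points (boundedness of $q$ via $\sum_s d_s \le 1$, and the per-color defect increment not being cancelled across colors) correctly identified and handled. No gaps.
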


\begin{proof}[Proof of Theorem \ref{thm:CRST}]

Let $d$ be any positive integer. Let $N$ be large enough so that if we define $\epsilon = 1/N$, then   $\frac{1}{c\log (2c)} \log\lp \frac{1}{2\epsilon} \rp \geq d+1$. Observe that with this choice of $N$, we also have $1/(2c)^d > 2d^2 \epsilon$. Let $M, N_0$ be the constants given by Lemma \ref{lem:reg} when $c$ is the number of colors and $m = 1/\epsilon$. Set $C = C(c,d) = \max\{N_0, M/d^2\epsilon\}$.

Now let $G$ be a graph on $n$ vertices $x_1, \ldots, x_n$ and maximum degree at most $d$. Consider an arbitrary $c$-coloring of $K_{Cn}$. 
Let $H_1, \ldots, H_c$ denote the subgraphs of $G$ induced by each of the $c$ colors respectively. 
By Lemma \ref{lem:reg}, there exists an equipartite partition $V(K_{Cn}) = A_1 \cup A_2 \cup \ldots \cup A_k$ that satisfies the regularity condition for each color class, i.e., for each $i \in [c]$, $V(H_i) =  A_1 \cup A_2 \cup \ldots \cup A_k$ gives an equipartite $\epsilon$-regular partition.

Let $H^*$ denote the graph whose vertex set is $\{A_i: i\in [k]\}$ and $A_i A_j$ is an edge if and only $(A_i, A_j)$ is $\epsilon$-regular in $H$. By Lemma \ref{lem:reg}, $|E(H)| \geq (1-\epsilon)\binom{k}{2}$. Hence by Turan's theorem, there exists a complete subgraph $H^{**}$ of $H^*$ of size at least $1/2\epsilon$. WLOG (with relabeling), assume that $V(H^{**}) = \{A_i: 1\leq i\leq 1/2\epsilon\}$. Now for each $A_i ,A_j \in V(H^{**})$, color the edge $A_iA_j$ with color $s$ if $d_s(A_i,A_j)$ is the largest among all colors in $[c]$ (break arbitrarily if the same). 
Recall that $R_c(K_{t}) \leq c^{ct}$ and $\frac{1}{c\log (2c)} \log\lp \frac{1}{2\epsilon} \rp \geq d+1$ by our assumption. Hence we have that $1/2\epsilon \geq R_c(K_{d+1})$. Then it follows from Ramsey's theorem that there is a monochromatic complete subgraph $H^{***}$ with $d+1$ vertices. WLOG, $H^{***}$ is in color $1$. Then we can relabel the sets in the partition so that 
\begin{enumerate}[(i)]
    \item $(A_i, A_j)$ is $\epsilon$-regular, and 
    \item $d_1(A_i, A_j) \geq \frac{1}{c}$
\end{enumerate}
for all $i,j$ with $1\leq i< j\leq d+1$. We then claim that $H_1$ contains a copy of $G$. Recall that $V(G) = \{x_i: i\in [n]\}$. We will choose $y_1, y_2,\ldots, y_n \in V(H_1)$ inductively so that the map $\phi: x_i \to y_i$ is an embedding of $G$ in $H_1$.
In particular, the points are chosen so that for each $i\in [n]$, the following are satisfied:
\begin{enumerate}[(a)]
    \item $y_t \in A_j$ for some $j \in [d+1]$ for each $t\in [i]$.
    \item For $t_1, t_2 \in [i]$, if $x_{t_1}x_{t_2}\in E(G)$, then $y_{t_1},y_{t_2}$ are adjacent in $H_1$ and are in different partition. 
    \item For $i < t \leq n$, define $V(t,i) = \{y_j: j\in [i], x_j x_t \in E(G)\}$. For each $r \in [d+1]$ such that $A_r \cap V(t,i) = \emptyset$, $A_r$ contains a subset $A'_r$ having at least  $|A'_r|/(2c)^{|V(t,i)|}$ so that every point in $A'_r$ is adjacent to every point in $V(t,i)$.
\end{enumerate}

Suppose that for some $i \in [n]$, the points $\{y_t:  t\leq [i]\}$ are already chosen so that the conditions $(a)$-$(c)$ above are satisfied.
We will then pick $y_{i+1}$ so that conditions $(a)$-$(c)$ remain true.

First pick some $r_0 \in [d+1]$ so that $A_{r_0} \cap V(i+1,i) =\emptyset$. This is possible since the degree of $x_{i+1}$ is at most $d$. By condition $(c)$, there exists $A'_{r_0} \subseteq A_{r_0}$ such that $\abs{A'_{r_0}} \geq \abs{A_{r_0}}/(2c)^{\ell}$ where $\ell = \abs{V(i+1,i)}$. Moreover, each vertex of $A'_{r_0}$ is adjacent to every vertex of $V(i+1,i)$. 
It's easy to see that with any choice of $y_{i+1}$ from $A'_{r_0}$, condition $(a)$ and $(b)$ are clearly satisfied.
For condition $(c)$, observe we only need to handle the values of $i+1 < t \leq n$ such that $x_t x_{i+1} \in E(G)$. There are at most $d$ such values since $d(x_{i+1}) \leq d$. Pick one such $t$ arbitrarily.  
Now pick an arbitrary $r \neq r_0$ such that $A_{r} \cap V(t,i) = \emptyset$. Observe $\ell' = |V(t,i+1)| = |V(t,i)| + 1$. By condition $(c)$, we already know that there exists some $A'_r \subseteq A_r$ such that $|A'_r|\geq |A_r|/(2c)^{\ell'-1} \geq \epsilon |A_r|$ and every vertex of $A'_r$ is adjacent to every vertex of $V(t,i)$. Now since $(A_{r}, A_{r_0})$ is $\epsilon$-regular and $d_1(A_r, A_{r_0})\geq \frac{1}{c}$, it follows that at most $\epsilon |A_{r_0}|$ of the points in $A'_{r_0}$ are adjacent to less than $\frac{1}{2c}$ of the points in $A'_r$. Fixing $t$ and proceeding through all values of $r$, we would eliminate at most $d \epsilon|A_{r_0}|$ candidates for $y_{i+1}$ in $A'_{r_0}$. Ranging over all of the $d$ possible values of $t$, we then eliminate at most $d^2 \epsilon |A_{r_0}|$ candidates of $y_{i+1}$ in $A'_{r_0}$. Moreover, there are at most $n$ points in $A'_{r_0}$ that may have been selected previously already. Since the number of partitions $k \leq M$ and $C \geq M/d^2\epsilon$, we have that $|A_{r_0}| \geq Cn/M$, which implies that $n \leq d^2 \epsilon |A_{r_0}|$.

In order to be able to pick $y_{i+1}$, it suffices to show that $|A'_{r_0}| > 2d^2\epsilon |A_{r_0}|$. This holds because $|A'_{r_0}|/|A_{r_0}| > 1/(2c)^d > 2d^2\epsilon$. This completes the proof of the theorem.

\end{proof}

\end{document}